\newcommand{\mathfont}{\mathbf}
\newcommand{\ZZ}{\mathfont Z}
\newcommand{\FF}{\mathfont F}
\newcommand{\PP}{\mathfont{P}}
\DeclareFontFamily{OT1}{rsfs}{}
\DeclareFontShape{OT1}{rsfs}{n}{it}{<-> rsfs10}{}
\DeclareMathAlphabet{\mathscr}{OT1}{rsfs}{n}{it}
\theoremstyle{plain}
\newtheorem{lem}{Lemma}
\newtheorem{thm}[lem]{Theorem}
\newtheorem{prop}[lem]{Proposition}
\newtheorem{cor}[lem]{Corollary}
\theoremstyle{definition}
\newtheorem{example}[lem]{Example}
\newtheorem{remark}[lem]{Remark}
\newtheorem{notation}[lem]{Notation}
\numberwithin{equation}{section}
\numberwithin{lem}{section}
\DeclareMathOperator{\NP}{NP}
\DeclareMathOperator{\hodge}{Hodge}
\DeclareMathOperator{\GNP}{GNP}
\title[Realizing Artin-Schreier covers with minimal Newton polygons]
{Realizing Artin-Schreier covers of curves with minimal Newton polygons in positive characteristic}
\author{Jeremy Booher and Rachel Pries} 
\date{\today}
 \email{jeremy.booher@canterbury.ac.nz} 
 \address{School of Mathematics and Statistics\\
 University of Canterbury\\
   Christchurch, 8140, New Zealand}
   \email{pries@math.colostate.edu}
   \address{Department of Mathematics\\
   Colorado State University\\
   Fort Collins, Colorado 80523}
\begin{document} 

\begin{abstract} 
Suppose $X$ is a smooth projective connected curve defined over an algebraically closed
field $k$ of characteristic $p>0$
and $B \subset X(k)$ is a finite, possibly empty, set of points.
The Newton polygon of a degree $p$ Galois cover of $X$ with branch locus $B$ 
depends on the ramification invariants of the cover.
When $X$ is ordinary, for every possible set of branch points and ramification invariants, 
we prove that there exists such a cover whose Newton polygon is minimal or close to minimal. 

Keywords: curve, Jacobian, positive characteristic, Artin-Schreier cover, wild ramification, 
zeta function, Newton polygon, exponential sums, $p$-rank, formal patching 

MSC10: primary 11G20, 11M38, 14D15, 14H30, 14H40; secondary 11T23, 14D10, 14G17 
\end{abstract}

\maketitle

\section{Introduction}

Let $p$ be a prime and let $\FF$ be a finite field of cardinality $q=p^m$.
Suppose $X$ is a smooth projective geometrically connected curve of genus $g$ defined over $\FF$.
By the Weil conjectures, the zeta function of $X$ is a rational function of the form $Z(X, T) =L(X, T)/(1-T)(1-qT)$.
The $L$-polynomial $L(X, T)$ is a polynomial of degree $2g$ with integer coefficients.

The Newton polygon $\NP_X$ of $L(X,T)$ is the lower convex hull of the points $(i, v_p(c_i)/m)$, 
where $c_i$ is the coefficient of $x^i$ in $L(X, T)$, and $v_p(c_i)$ is the valuation of $c_i$ at $p$.
The Newton polygon $\NP_X$ is symmetric, with integral breakpoints, beginning at $(0,0)$ and ending at $(2g,g)$.
The Newton polygon is a geometric invariant; it can be defined more generally using the 
Dieudonn\'e module of the $p$-divisible group of ${\rm Jac}(X)$.  
In particular, it does not change under base extension of $\FF$.  
The slopes of the Newton polygon are the slopes of the constituent line segments.  
There is a natural partial ordering 
on the set of convex symmetric polygons with initial point $(0,0)$ and the same ending point;
we say that one polygon ``lies on or above'' another if the first lies geometrically above the second.  
The Newton polygon either stays the same or goes up under specialization.

In this paper, we study the Newton polygons of Artin-Schreier $\ZZ/p\ZZ$-covers $\pi : Y \to X$ that are branched at a finite set $B$ over an algebraically closed field $k=\bar{\FF}_p$.
Our goal is to prove, when $X$ is ordinary, that there exist Artin-Schreier $\ZZ/p\ZZ$-covers of $X$, 
with branch locus $B$
and having prescribed ramification above $B$, 
with ``low'' Newton polygon.   
(Recall that the curve $X$ is \emph{ordinary} if the number of $p$-torsion points in ${\rm Jac}(X)(k)$ is $p^g$.  Equivalently, the slopes of $\NP_X$ are the multi-set $\{0,1\}^g$ if and only if $X$ is ordinary.)

In \cite{KramerMiller}, Kramer-Miller gives a lower bound on the Newton polygon of an Artin-Schreier $\ZZ/p\ZZ$-cover 
$\pi :  Y \to X$ in terms of the Newton polygon of $X$ and the ramification of $\pi$.  
More precisely, let $B \subset X(k)$ be a finite, possibly empty, set of points.
Let $r=\#B$ and $\pi:Y \to X$ be a $\ZZ/p \ZZ$-cover with branch locus $B$.
For each $Q \in B$, let $d_Q$ be the \emph{ramification invariant} of $\pi$ above $Q$, namely the jump in the lower numbering
of the filtration of higher ramification groups above $Q$.  Note that $d_Q$ is a positive prime-to-$p$ integer.
Let $D=\{d_Q\}_{Q \in B}$.  In \cite[Corollary 1.2]{KramerMiller}, 
Kramer-Miller proves that the Newton polygon of $Y$ lies on or above 
the Newton polygon whose slopes are the multi-set
\begin{equation} \label{Elowerbound} 
\NP_X^{\hodge}(D):=\NP_X \cup \{0\}^{(p-1)(g+r-1)} \cup \{1\}^{(p-1)(g+r-1)} \cup 
\left(\bigcup_{Q \in B} \left \{ \frac{1}{d_Q}, \ldots, \frac{d_Q-1}{d_Q} \right \}^{p-1} \right).
\end{equation}
Here the exponent denotes the multiplicity of the slope.

When $X \simeq \PP^1$ and $B = \{ \infty \}$, results of Zhu, Blache, and F\'erard show that Kramer-Miller's lower bound for the Newton polygon is not optimal for many choices of ramification invariants $D = \{d\}$, \cite{Zhu03,BlacheFerard}.  
In particular, when $p \geq 3d$ the generic Newton polygon for the Artin-Schreier cover is known; denote it by $(p-1) \GNP(d,p)$ (see Notation~\ref{NBF} and Theorem~\ref{TBF}).  In many cases, $(p-1) \GNP(d,p)$ lies strictly above $\NP^{\hodge}_{\PP^1}(\{d\})$; see Example~\ref{ex:matching}. 

We consider a different Newton Polygon $\NP_X(D)$ whose slopes are the multi-set
\begin{equation} \label{Esmall}
\NP_X(D) := \NP_X \cup \{0\}^{(p-1)(g+r-1)} \cup \{1\}^{(p-1)(g+r-1)} \cup 
\left(\bigcup_{Q \in B} (p-1)\GNP(d_Q,p) \right).
\end{equation}
It follows from \cite[Theorem 7.4]{Robba} that $\NP_X^{\hodge}(D) \leq \NP_X(D)$. 
When $X \simeq \PP^1$, $B=\{\infty\}$, $D = \{d\}$, and $p \geq 3d$, the work of Zhu, Blache and F\'erard shows $\NP_X(D)=(p-1)\GNP(d,p)$ is the optimal lower bound for the Newton polygon of $Y$.

We use the theory of formal patching to prove the existence of Artin-Schreier $\ZZ/p\ZZ$-covers of an arbitrary ordinary curve, with an arbitrary branch locus and ramification invariants, 
that have ``low'' Newton polygon.

\begin{thm} \label{Tintro}
Suppose $X$ is a smooth projective connected curve defined over 
$k=\bar{\FF}_p$.  
Let $B \subset X(k)$ be a finite, possibly empty, set of points.
For $Q \in B$, let $d_Q$ be a positive prime-to-$p$ integer.

Suppose that $X$ is ordinary and that $p \geq {\rm max}\{3d_Q\}_{Q \in B}$.
Then there exists a $\ZZ/p\ZZ$-cover $\pi: Y \to X$, with branch locus $B$ and ramification invariants $D=\{d_Q\}_{Q\in B}$,
such that the Newton polygon $\NP_Y$ of $Y$ satisfies
\begin{equation}
\NP^{\hodge}_X(D) \leq \NP_Y \leq \NP_X(D).
\end{equation}
\end{thm}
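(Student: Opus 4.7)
The plan is to use a degeneration and formal patching argument. Kramer--Miller's inequality~\eqref{Elowerbound} already gives the lower bound $\NP^{\hodge}_X(D) \leq \NP_Y$, so the task is to exhibit a $\ZZ/p\ZZ$-cover of $X$ with branch locus $B$ and ramification invariants $D$ whose Newton polygon lies on or below $\NP_X(D)$. The idea is to build a singular cover $\pi_0 : Y_0 \to X_0$ on a nodal degeneration of $X$ with $\NP_{Y_0} = \NP_X(D)$, and then smooth $\pi_0$ back to a cover $\pi : Y \to X$ using formal patching; specialization of Newton polygons then forces $\NP_Y \leq \NP_{Y_0}$.

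For the singular model, form $X_0 = X \cup \bigsqcup_{Q \in B} \PP^1_Q$, with each $\PP^1_Q$ attached to $X$ by identifying $Q \in X$ with $0 \in \PP^1_Q$. Because $X$ is ordinary, choose an \'etale $\ZZ/p\ZZ$-cover $Y_X \to X$; Deuring--Shafarevich forces $Y_X$ to be ordinary. For each $Q \in B$, Theorem~\ref{TBF} of Zhu--Blache--F\'erard (which applies because $p \geq 3 d_Q$) supplies an Artin--Schreier cover $Z_Q \to \PP^1_Q$ branched only at $\infty$, with ramification invariant $d_Q$ and Newton polygon $(p-1)\GNP(d_Q, p)$. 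Glue $Y_X$ to each $Z_Q$ along a $\ZZ/p\ZZ$-equivariant bijection of the $p$ preimages above the corresponding node to produce a connected nodal $\ZZ/p\ZZ$-cover $\pi_0 : Y_0 \to X_0$. The dual graph of $Y_0$ has first Betti number $r(p-1)$, so $\on{Jac}(Y_0)$ is an extension of $\on{Jac}(Y_X) \times \prod_Q \on{Jac}(Z_Q)$ by a torus of rank $r(p-1)$; combining the ordinary slopes of $Y_X$ and the torus with the wild pieces $(p-1)\GNP(d_Q, p)$ gives $\NP_{Y_0} = \NP_X(D)$ after a direct slope count.

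To pass to the generic fiber, view $X$ as the generic fiber of a one-parameter formal family $\mathcal{X}$ whose special fiber is $X_0$, obtained by smoothing each node of $X_0$ in the standard way. Each node of $Y_0$ lies above a node of $X_0$ with $\ZZ/p\ZZ$ acting freely on its preimage, because both $Y_X \to X$ and $Z_Q \to \PP^1_Q$ are \'etale at the gluing points; thus $\pi_0$ is locally \'etale at its nodes and its deformation is unobstructed. Formal patching for wildly ramified covers then produces a cover $\mathcal{Y} \to \mathcal{X}$ whose generic fiber $\pi : Y \to X$ has branch locus $B$ and ramification invariants $D$, since the Artin--Schreier data at $\infty \in \PP^1_Q$ on the special fiber transports to Artin--Schreier data with the same conductor at $Q \in X$ on the generic fiber. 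Newton polygon specialization applied to the family $\mathcal{Y}$ gives $\NP_Y \leq \NP_{Y_0} = \NP_X(D)$, completing the upper bound.

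The main obstacle is the patching/deformation step: one must verify that the singular cover $\pi_0$ really deforms over a family whose generic fiber is the given curve $X$ (rather than merely some ordinary curve of the same genus) and that the prescribed ramification invariants are preserved on passing to the generic fiber. This is precisely the job of the formal patching machinery for wildly ramified $\ZZ/p\ZZ$-covers; the hypothesis $p \geq 3 d_Q$ enters only through the Zhu--Blache--F\'erard input used to build $Z_Q$ with Newton polygon equal to $(p-1)\GNP(d_Q, p)$.
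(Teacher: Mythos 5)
Your proposal takes essentially the same route as the paper: both derive the lower bound directly from Kramer--Miller, build a nodal cover $Y_\circ \to X_\circ$ by gluing an unramified $\ZZ/p\ZZ$-cover of $X$ to the Zhu--Blache--F\'erard covers $Z_Q \to \PP^1_Q$ along the fibers over the nodes, compute the Newton polygon of $Y_\circ$ from the toric rank of ${\rm Jac}(Y_\circ)$ plus the ordinary pieces, invoke formal patching \`a la Harbater--Stevenson to deform back to a smooth $\ZZ/p\ZZ$-cover of $X$ with the prescribed ramification, and finish with Grothendieck--Katz specialization. The slope accounting and the identification of the torus rank with the first Betti number $r(p-1)$ of the dual graph are correct when $Y_X \to X$ is a \emph{connected} \'etale $\ZZ/p\ZZ$-cover, which exists whenever $X$ is ordinary and $g > 0$.

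The gap is the case $g = 0$. When $X \simeq \PP^1$, there is no connected \'etale $\ZZ/p\ZZ$-cover (the tame fundamental group is trivial and the $p$-rank is $0$), so the sentence ``Because $X$ is ordinary, choose an \'etale $\ZZ/p\ZZ$-cover $Y_X \to X$'' produces only the disconnected trivial cover $X \times \ZZ/p\ZZ$. With a disconnected $Y_X$ the dual graph of $Y_0$ is bipartite with $p$ right-hand vertices, and its first Betti number is $(r-1)(p-1)$, not $r(p-1)$; your toric rank and hence your slope count no longer apply verbatim. The correct count in that case is $p \cdot g + (r-1)(p-1) = (p-1)(g+r-1)$ slope-$0$ (and slope-$1$) contributions when $g=0$, which still matches $\NP_X(D)$, but this requires the separate dual-graph computation. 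The paper handles this explicitly by distinguishing the connected/disconnected cases in its Proposition~\ref{Pextension} and in the proof of Theorem~\ref{Tmain}. Aside from this, your argument is the same as the paper's.
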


Theorem~\ref{Tintro} follows directly from Theorem~\ref{Tmain} and \cite[Corollary 1.2]{KramerMiller}. 

\begin{remark}
\begin{enumerate}
\item  In the special case that $d_Q \mid (p-1)$ for every $Q \in B$, then
$\NP^{\hodge}_X(D) = \NP_X(D)$ and hence $\NP_Y$ equals $\NP^{\hodge}_X(D)$ 
which is minimal; see Corollary~\ref{cor:p1modd}.

\item With Kramer-Miller, we talked about whether the lower bound produced by his proof can
be strengthened to $\NP_X(D)$.
In situations where that is true, Theorem~\ref{Tmain} guarantees the existence of 
$\ZZ/p\ZZ$-covers with minimal Newton polygon.

\item In \cite{anumber}, we use the techniques of this paper to prove the existence of $\ZZ/p\ZZ$-covers 
with minimal $a$-number.

\end{enumerate}
\end{remark}

\begin{remark} 
The result of Kramer-Miller and the other results mentioned above are actually consequences of 
theorems about exponential sums.
There is a beautiful connection between Newton polygons of Artin-Schreier $\ZZ/p\ZZ$-covers of $X$ and Newton polygons of exponential sums of functions $f$ on $X$.
The theory of exponential sums is well-developed;
one major theme is to show that the Newton polygon ``lies on or above'' the Hodge polygon.
This was proven by Robba \cite[Theorem 7.4]{Robba} for ${\mathbb G}_m$ and by   
Wan \cite{Wan93}, especially Propositions 2.2 and 2.3, 
for many higher-dimensional tori. 
Under a congruence condition on $p$,
Sperber \cite[Theorem 3.11]{Sperber93} proved that the Newton polygon equals the Hodge polygon
in many situations; for example, this includes the case 
when $f$ is a polynomial of degree $d$ and $p \equiv 1 \bmod d$.
There are many important papers on this topic, 
including \cite{SZhu, Zhu03, Zhu04, Zhu04affinoid, BlacheFerard}. 

It is not clear if our proof contributes to the story about Newton polygons of exponential sums
associated with functions on curves over a fixed finite field.  
The issue is that it is not possible to control the field of definition of the cover
when using the technique of formal patching. \end{remark}

\subsection{Acknowledgments}
Booher was partially supported by the Marsden Fund Council administered by the Royal Society of New Zealand.
Pries was partially supported by NSF grant DMS-19-01819. 
We thank Bryden Cais, Joe Kramer-Miller, and Felipe Voloch for helpful conversations. 
We also thank the referee for the quick and helpful report.

\section{Initial statements}

As before, let $p$ be a prime, $\FF$ a finite field of cardinality $q = p^m$, and $X$ a smooth projective geometrically connected curve of genus $g$ defined over $\FF$. 
Without loss of generality, we suppose that the points in $B$ are defined over $\FF$.

\subsection{The $p$-rank}

The $p$-rank of $X$ is the integer $f_X$ such that $p^{f_X}$ is the number of $p$-torsion points 
in ${\rm Jac}(X)(\bar{\FF})$.  The curve $X$ is ordinary if $f_X = g$.
The $p$-rank equals the multiplicity of the slope $0$ in $\NP_X$.
More generally, the $p$-rank of a semi-abelian variety $A$ is 
$f_A={\rm dim}_{\FF_p} ({\rm Hom}(\mu_p, \bar{A}))$, 
where $\bar{A}$ is the base-change of $A$ to $k=\bar{\FF}$ and $\mu_p$ is the kernel of Frobenius on ${\mathbf G}_m$. 

\begin{lem} \label{Lprank}
Suppose $X$ is ordinary.  Let $\pi:Y \to X$ be a $\ZZ/p\ZZ$-cover, with branch locus $B$ and ramification invariants $D$.  Then the $p$-rank of $Y$ is 
the multiplicity of the slope $0$ in $\NP^{\hodge}_X(D)$ (and in $\NP_X(D)$).
\end{lem}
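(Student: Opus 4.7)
The plan is a direct bookkeeping calculation combining the definitions of the two polygons with the Deuring--Shafarevich formula.

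First I would count the multiplicity of slope $0$ in $\NP_X^{\hodge}(D)$. Since $X$ is ordinary, the slopes of $\NP_X$ form the multi-set $\{0,1\}^g$, contributing $g$ zeros. The explicit term $\{0\}^{(p-1)(g+r-1)}$ in \eqref{Elowerbound} contributes another $(p-1)(g+r-1)$. The remaining slopes in \eqref{Elowerbound} are either $1$'s or fractions $j/d_Q$ with $1 \le j \le d_Q-1$, which are strictly between $0$ and $1$ and so contribute nothing. Thus the multiplicity of slope $0$ in $\NP_X^{\hodge}(D)$ is $g + (p-1)(g+r-1)$.

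Next I would show the same count holds for $\NP_X(D)$. By \eqref{Esmall} this amounts to showing that each $(p-1)\GNP(d_Q,p)$ has no slope-$0$ component. Since $(p-1)\GNP(d_Q,p)$ is by definition the generic Newton polygon of an Artin--Schreier $\ZZ/p\ZZ$-cover of $\PP^1$ branched only at $\infty$ with ramification invariant $d_Q$, and any such cover has $p$-rank $0$ by Deuring--Shafarevich (applied with $X \simeq \PP^1$ and a single branch point), this polygon indeed has multiplicity $0$ at slope $0$. Hence the multiplicity of slope $0$ in $\NP_X(D)$ also equals $g + (p-1)(g+r-1)$.

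Finally, I would compute $f_Y$ directly via the Deuring--Shafarevich formula for the $\ZZ/p\ZZ$-cover $\pi\colon Y \to X$ with $r = \#B$ wildly ramified points:
\[
f_Y - 1 = p(f_X - 1) + (p-1)r.
\]
Since $X$ is ordinary, $f_X = g$, so $f_Y = g + (p-1)(g+r-1)$. Because the $p$-rank is the multiplicity of slope $0$ in the Newton polygon, $f_Y$ matches the counts obtained above, proving the lemma. There is no real obstacle here: the only nontrivial inputs are Deuring--Shafarevich and the observation that the generic Newton polygon of a totally ramified Artin--Schreier cover of $\PP^1$ has no zero slopes, both of which are routine.
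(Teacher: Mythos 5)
Your proof follows the same strategy as the paper's: apply the Deuring--Shafarevich formula to compute $f_Y = g + (p-1)(g+r-1)$ and match this against the multiplicity of slope $0$ in the two polygons. In fact you supply more detail than the paper, which simply asserts the matching count without tallying the slopes.

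One inaccuracy is worth correcting. $(p-1)\GNP(d_Q,p)$ is \emph{not} defined as the generic Newton polygon of a $\ZZ/p\ZZ$-cover of $\PP^1$ branched only at $\infty$; it is defined combinatorially in Notation~\ref{NBF}, and its identification with such a cover's Newton polygon is a theorem of Zhu and Blache--F\'erard (Theorem~\ref{TBF}) that requires $p \geq 3d_Q$. Lemma~\ref{Lprank} carries no lower bound on $p$, so your Deuring--Shafarevich justification for the absence of slope $0$ in $\GNP(d_Q,p)$ is not self-contained in the stated generality. The fix is immediate from \eqref{EYn}: since $\lceil a/d_Q \rceil \geq a/d_Q$ and $\sum_{k=1}^n (pk - \sigma(k)) = (p-1)n(n+1)/2$ for any $\sigma \in S_n$, one gets $Y_n \geq (p-1)n(n+1)/(2d_Q) > 0$, hence $Y_n \geq 1$; so every slope $Y_n/(n(p-1))$ appearing in the lower convex hull defining $\GNP(d_Q,p)$ is strictly positive, and the polygon has no slope $0$ unconditionally. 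With that replacement your proof is complete and is essentially the paper's.
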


\begin{proof}
Let $r = \#B$.
By the Deuring-Shafarevich formula \cite[Theorem 4.2]{subrao}, \[f_Y-1 = p(f_X-1) + r(p-1).\]
Since $X$ is ordinary, $f_Y = 1 + p(g -1) + r(p-1)$.
So $f_Y = g + (p-1)(g-1+r)$, which is the multiplicity of the slope $0$ in $\NP^{\hodge}_X(D)$ and in $\NP_X(D)$.
\end{proof}

\subsection{An Artin-Schreier cover of a singular curve} \label{Sbuild}

Let $X$ and $B$ and $D$ be as above. 
In this section, we assume that $r \geq 2$ if $X \simeq \PP^1$. 
We build an Artin-Schreier cover
$\pi_\circ: Y_\circ \to X_\circ$ of singular curves, depending on the data of 
an unramified $\ZZ/p\ZZ$-cover $\pi': Y' \to X$ and for each $Q \in B$ a $\ZZ/p\ZZ$-cover $\pi_Q: Y_Q \to \PP^1$ branched only at $\infty$ where it has ramification invariant $d_Q$. 

By the Riemann-Hurwitz formula, if $Y'$ is connected, then its genus is 
\begin{equation} \label{Egenus}
g_{Y'} = g + (p-1)(g -1).
\end{equation}
Note that we allow $Y'$ to be disconnected; in fact, this is the only possibility when $f_X=0$.

Let $g_Q$ be the genus of $Y_Q$; by \cite[IV, Prop.\ 4]{serreLF}),
$g_Q = (p-1)(d_Q-1)/2$.

We build the singular curve $X_\circ$ by attaching a projective line to $X$ at each $Q \in B$ 
by identifying the point $0$ on $\PP^1$ and the point $Q$ in an ordinary double point.
Next, we build a $\ZZ/p \ZZ$-cover $\pi_\circ: Y_\circ \to X_\circ$.  
Let $Y_\circ$ be the singular curve whose components are $Y'$ and $Y_Q$ for $Q \in B$, formed 
by identifying the fiber of $\pi_Q$ above $0$ and the fiber of $\pi'$ above $Q$, in $p$ ordinary double points, 
in a Galois equivariant way.  
Let $\epsilon$ be the rank of the dual graph of $Y_\circ$, which is the minimal number of edges that need to be removed from the dual graph in order to form
a tree.

\begin{lem} 
The dual graph of $Y_\circ$ is a bipartite graph with $r$ vertices on the left side.
\begin{enumerate}
\item If $Y'$ is connected, then there is one vertex on the right side and it is connected to each of the vertices on the left
with $p$ edges.  Also $\epsilon= r(p-1)$.
\item If $Y'$ is disconnected, then there are $p$ vertices on the right side, each of which is connected to 
each vertex on the left with a unique edge.  Also $\epsilon=(r-1)(p-1)$.
\end{enumerate}
\end{lem}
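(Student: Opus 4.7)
The plan is to read off the vertices and edges of the dual graph of $Y_\circ$ directly from the construction of $\pi_\circ$: the vertices are the irreducible components and the edges are the nodes. Since every node of $Y_\circ$ was produced by identifying a point of some $Y_Q$ with a point of $Y'$, every edge joins a ``left'' vertex (some $Y_Q$) to a ``right'' vertex (a component of $Y'$). In particular the graph is bipartite with exactly $r$ left vertices, and it remains to describe the right vertices and the edge count.

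First I would establish the dichotomy for the right side. Because $\pi': Y' \to X$ is an \'etale Galois cover with group $\ZZ/p\ZZ$, the stabilizer of any connected component of $Y'$ is a subgroup of $\ZZ/p\ZZ$, hence either trivial or the full group. Thus either $Y'$ is connected, giving one right vertex, or $Y'$ is a disjoint union of $p$ copies $X_1,\ldots,X_p$ of $X$ permuted simply transitively by $\ZZ/p\ZZ$, giving $p$ right vertices.

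Next I would count edges. For each $Q \in B$, both $\pi_Q^{-1}(0)$ and $\pi'^{-1}(Q)$ are \'etale fibers of size $p$ and hence free transitive $\ZZ/p\ZZ$-sets; the Galois-equivariant identification of them produces $p$ nodes above $Q$. In case (1) all $p$ of these nodes join $Y_Q$ to the unique right vertex, giving $p$ parallel edges. In case (2), since $\ZZ/p\ZZ$ acts simply transitively on $\{X_1,\ldots,X_p\}$ and on $\pi'^{-1}(Q)$, each $X_i$ meets the fiber $\pi'^{-1}(Q)$ in exactly one point; so exactly one of the $p$ nodes above $Q$ glues $Y_Q$ to $X_i$, producing a single edge between each pair $(Y_Q, X_i)$.

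Finally I would check that the dual graph is connected---in case (1) through the unique right vertex, and in case (2) because every left vertex is adjacent to every right vertex---and then apply $\epsilon = E - V + 1$. In case (1), $V = r+1$ and $E = rp$ yield $\epsilon = r(p-1)$; in case (2), $V = r+p$ and $E = rp$ yield $\epsilon = (r-1)(p-1)$. The only real point requiring care is the edge count in case (2): one needs that an equivariant bijection between two free transitive $\ZZ/p\ZZ$-sets is determined by the image of a single element, forcing exactly one of the $p$ nodes above each $Q$ to land on each component $X_i$. This is really the main (albeit mild) obstacle; everything else reduces to elementary combinatorics and the classification of subgroups of $\ZZ/p\ZZ$.
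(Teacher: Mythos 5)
Your proposal is correct and follows essentially the same route as the paper: both read the bipartite structure off the construction and then compute the cycle rank. The paper's own proof is terser (it calls the vertex/edge description ``immediate from the construction'' and computes $\epsilon$ by explicitly describing which edges to remove to obtain a spanning tree), whereas you expand the vertex/edge count via stabilizers of components and equivariant bijections of $\ZZ/p\ZZ$-torsors, and you compute $\epsilon$ via the equivalent formula $E - V + 1$ after checking connectedness; the underlying combinatorics is the same.
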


\begin{proof}
The facts about the vertices and edges are immediate from the construction of $Y_\circ$.
To compute $\epsilon$, we determine the minimal number of edges that need to be removed from the dual graph in order to form a tree.
When $Y'$ is connected, then we need to remove $p-1$ edges
between each vertex on the left and the vertex on the right.
When $Y'$ is disconnected, we pick a distinguished vertex on the left and the right; 
then we need to remove all edges between the other $r-1$ vertices on the left and the other $p-1$ vertices on the right.
\end{proof}

The next result is immediate from \cite[9.2.8]{BLR}.

\begin{prop} \label{Pextension}
With notation as above:
\begin{enumerate}
\item If $Y'$ is connected, then ${\rm Jac}(Y_\circ)$ is an extension of 
$\displaystyle {\rm Jac}(Y') \oplus \left (\bigoplus_{Q \in B} {\rm Jac}(Y_Q) \right) $ by a torus of rank $\epsilon=r(p-1)$.  
\item If $Y'$ is disconnected, then ${\rm Jac}(Y_\circ)$ is an extension of 
$\displaystyle {\rm Jac}(X)^p \oplus \left (\bigoplus_{Q \in B} {\rm Jac}(Y_Q) \right)$ by a torus of rank $\epsilon=(r-1)(p-1)$. 
\end{enumerate}
In both cases, ${\rm Jac}(Y_\circ)$ 
is a semi-abelian variety of dimension 
\[g_{Y_\circ}=g+(p-1)(g+r-1) + \sum_{Q \in B}g_{Q}.\] 
\end{prop}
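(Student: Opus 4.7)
The plan is to invoke the structure theorem for Picard varieties of semi-stable curves stated as Proposition 9.2.8 of \cite{BLR}. First I would verify that $Y_\circ$ is a semi-stable curve: by construction its only singularities are ordinary double points (the $p$ nodes lying above each $Q \in B$), and each irreducible component is smooth, being either $Y'$ (or one of its $p$ components when $Y'$ is disconnected) or a $Y_Q$.

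The theorem then furnishes an exact sequence
\[
0 \to T \to {\rm Jac}(Y_\circ) \to {\rm Jac}(\widetilde{Y_\circ}) \to 0,
\]
where $\widetilde{Y_\circ}$ is the normalization (that is, the disjoint union of the already-smooth components) and $T$ is a torus whose rank is the first Betti number of the dual graph. I would read the torus rank directly from the preceding lemma: $\epsilon = r(p-1)$ when $Y'$ is connected and $\epsilon = (r-1)(p-1)$ when $Y'$ is disconnected.

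Identifying ${\rm Jac}(\widetilde{Y_\circ})$ as the product of the Jacobians of the components: if $Y'$ is connected this gives ${\rm Jac}(Y') \oplus \bigoplus_{Q \in B} {\rm Jac}(Y_Q)$. If $Y'$ is disconnected, then since a degree $p$ \'etale $\ZZ/p\ZZ$-cover of $X$ whose source splits into $p$ components must be the trivial cover, one has $Y' \cong X \sqcup \cdots \sqcup X$ and ${\rm Jac}(Y') \cong {\rm Jac}(X)^p$, yielding the second formula.

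The dimension count is then a one-line addition. In the connected case, $g_{Y'} + \sum_Q g_Q + r(p-1)$ simplifies via \eqref{Egenus} to $g + (p-1)(g+r-1) + \sum_Q g_Q$; in the disconnected case, $pg + \sum_Q g_Q + (r-1)(p-1)$ rearranges to the same expression. As the paper itself signals by calling the result ``immediate,'' there is no substantive obstacle: the content is simply that $Y_\circ$ is nodal, so \cite[9.2.8]{BLR} applies, and the combinatorics of the dual graph was already worked out in the preceding lemma.
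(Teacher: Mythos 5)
Your proposal is correct and matches the paper's approach exactly: the paper likewise declares the result ``immediate from \cite[9.2.8]{BLR},'' relying on the nodal structure of $Y_\circ$, the torus rank computed in the preceding lemma on the dual graph, and the identification of $Y'$ with $p$ copies of $X$ in the disconnected case. You have simply spelled out the details the paper leaves implicit.
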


\subsection{Formal patching} \label{Sfp}

In Section~\ref{Sbuild}, we constructed a $\ZZ/p\ZZ$-cover $\pi_\circ:Y_\circ \to X_\circ$ of singular curves.
In this section, we consider $\pi_\circ$ to be defined over $k = \bar{\FF}$.  

\begin{prop} \label{Pharbater}
The $\ZZ/p\ZZ$-cover $\pi_\circ: Y_\circ \to X_\circ$ of singular curves has a flat deformation to
a $\ZZ/p \ZZ$-cover $\pi'':Y'' \to X$ defined over $k$ such that $Y''$ is smooth and connected,  
$\pi''$ has branch locus $B$, and $\pi''$ has ramification invariant $d_Q$ above each $Q \in B$.
\end{prop}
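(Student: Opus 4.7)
The plan is to apply formal patching in the style of Harbater, using the crucial fact that $\pi_\circ$ is étale at every node of $Y_\circ$. This étaleness holds because $\pi'$ is unramified on $X$ and, for each $Q \in B$, the cover $\pi_Q$ is branched only at $\infty \in \PP^1$ and is therefore étale at the attaching point $0 \in \PP^1$. The overall strategy is to simultaneously smooth all nodes of $X_\circ$ while lifting $\pi_\circ$, and then identify the generic fiber of the resulting deformation with $X$ itself.

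First I would set up the local deformations. Let $R$ be a complete discrete valuation ring with residue field $k$ and uniformizer $t$. At each node of $X_\circ$ corresponding to $Q \in B$, the standard smoothing replaces $\on{Spec}\, k[[u,v]]/(uv)$ by $\on{Spf}\, R[[u,v]]/(uv-t)$, where $u$ and $v$ are parameters on $X$ at $Q$ and on $\PP^1$ at $0$ respectively. Because $\pi_\circ$ is étale above the node, it lifts canonically over this formal smoothing to a $\ZZ/p\ZZ$-cover consisting of $p$ disjoint copies. Away from the nodes, take the trivial $R$-deformation of each component, equipped with the cover $\pi'$ on $X$ and the covers $\pi_Q$ on the $\PP^1_Q$. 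Harbater's formal patching theorem then assembles these local pieces into a flat projective $R$-curve $\mathcal{X}$ together with a $\ZZ/p\ZZ$-cover $\mathcal{Y} \to \mathcal{X}$ whose special fiber is $\pi_\circ\colon Y_\circ \to X_\circ$. The generic fiber $\mathcal{X}_\eta$ is a smooth curve whose arithmetic genus equals that of $X_\circ$, namely $g$; moreover, because the smoothing affects only a formal neighborhood of each node while the rest of $X$ is deformed trivially, the $\PP^1$-tail at $Q$ gets absorbed into the formal disk at $Q \in X$, identifying $\mathcal{X}_\eta$ with $X$. Let $\pi''\colon Y'' \to X$ denote the resulting generic fiber cover.

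It remains to check the three required properties. The total space $Y''$ is smooth because the local smoothing at each node replaces it by a regular formal scheme, and away from the nodes the generic fiber agrees with the smooth covers $Y'$ and $Y_Q$. Connectedness of $Y''$ follows from connectedness of $Y_\circ$ (by the previous lemma its bipartite dual graph is connected) together with upper semicontinuity in a proper flat family of the number of geometric connected components of the fibers. For the branch data, note that under the identification $\mathcal{X}_\eta \simeq X$, the restriction of $\pi''$ to a formal neighborhood of $Q$ is isomorphic to the restriction of $\pi_Q$ to a formal neighborhood of $\infty \in \PP^1$, so $\pi''$ is totally ramified at $Q$ with ramification invariant $d_Q$. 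Away from $B$, both the restriction of $\pi'$ and the étale extensions across the formal annuli at the nodes contribute no ramification, so the branch locus of $\pi''$ equals $B$ exactly.

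The main obstacle is executing the formal patching step rigorously: one must specify compatible formal $R$-models on overlapping opens (the formal annuli around each node and the formal neighborhoods of the components) and algebraize the resulting formal scheme to a projective $R$-scheme. This is handled by the standard machinery once the étale local structure at each node is pinned down, but it is also the only nontrivial ingredient in the argument --- everything else (smoothness, connectedness, ramification invariants) is bookkeeping on the local models.
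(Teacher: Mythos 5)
Your approach is essentially the one the paper takes: build a deformation over $R=k[[t]]$ by locally smoothing each node of $X_\circ$ (exploiting that $\pi_\circ$ is \'etale over every node, since $\pi'$ is unramified and each $\pi_Q$ is \'etale over $0$), take a constant deformation of the Artin-Schreier cover near each $\infty_Q$, and glue these local data with the Harbater--Stevenson patching/thickening machinery. The paper makes the identification of the model precise by constructing the total space $\mathcal{X}_S$ as a blow-up of $X\times_k S$ at the points $Q$ in the special fiber, which forces the generic fiber to be $X_K:=X\times_k K$; your heuristic ``the $\PP^1$-tail gets absorbed into the formal disk at $Q$'' is the same picture, though it should be phrased as an identification with $X_K$, not with $X$.

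There is a genuine gap at the end, though. Your $Y''$ is the generic fiber of the $R$-model, so it lives over $K=k((t))$ (or $\overline{K}$ after base change), not over $k$. Proposition~\ref{Pharbater} asserts a cover $\pi''\colon Y''\to X$ \emph{defined over $k$}, and this is used downstream in Theorem~\ref{Tmain}. One more step is needed and it is not merely bookkeeping: as in the paper, apply Artin's algebraization theorem to descend the family $\tilde\pi$ from $R=k[[t]]$ to $\on{Spec}(L)$ with $L$ finite over $k[t]$, and then specialize at a suitable $k$-rational closed point $s''$ of $\on{Spec}(L)$ away from the locus of bad fibers (such a point has residue field $k$ because $k$ is algebraically closed, and at such a point the base curve is again $X$ since $\mathcal{X}_S\to S$ is trivial away from $t=0$). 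The fiber over $s''$ is the cover $\pi''$ the proposition asks for. Your algebraization remark concerns Grothendieck existence (from a formal $R$-scheme to a projective $R$-scheme), which is a different step; without the descent-and-specialize step the construction hands you a cover over a transcendental extension of $k$, not over $k$ itself. Aside from this, your handling of \'etale lifting at the nodes, connectedness via $h^0$ semicontinuity, and the matching of branch locus and ramification invariants with Harbater--Stevenson's Theorem~5 are all in line with the paper.
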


\begin{proof}
This is immediate from the theory of formal patching, see \cite{harbaterstevenson}.
\end{proof}

\begin{remark}
For the convenience of the reader who is not familiar with formal patching, we include some details of the proof.

Let $R=k[[t]]$, which has fraction field $K=k((t))$ and residue field $k$. 
Let $S={\rm Spec}(R)$, with generic point $S_K = {\rm Spec}(K)$ and closed point $S_\circ = {\rm Spec}(k)$.
Consider the $S$-curve $X_S=X \times_{S_\circ} S$
and the sections $Q_S=Q \times_{S_\circ} S$.  Let $B_S = B \times_{S_\circ} S = \cup_{Q \in B} Q_S$.
We first define an $S$-curve ${\mathcal X}_S$ whose generic fiber 
is $X_S \times_S S_K$ and whose special fiber is $X_\circ$. 
The curve ${\mathcal X}_S$ is called a \emph{thickening} of $X_\circ$.

To do this, for each $Q \in B$, we 
blow-up the point $Q \in B_\circ$ on the special fiber $X_{S_\circ}$ to a projective line, which we denote by $X_Q$. 
Without loss of generality, we can assume that:\\
(i) the closure of $Q_K$ intersects $X_Q$ at the point at infinity, denoted by $\infty_Q$, on $X_Q$; and\\
(ii) the strict transform $X_{str}$ of $X_{S_\circ}$ intersects $X_Q$ 
at the point $0$, denoted by $0_Q$, on $X_Q$.\\
Let $\mathcal{X}_S$ denote this blowup.

Next, we consider the complete local rings of ${\mathcal X}_S$ at some points of interest.
Let $R_{Q,0}$ (resp.\ $R_{Q, \infty}$) be the complete local ring of ${\mathcal X}_S$ at $0_Q$ (resp.\ $\infty_Q$).
Let $x_Q$ be a parameter on $X_Q$ that has a zero of order $1$ at $0_Q$ and has a pole of order $1$ at $\infty_Q$.
Let $x_Q'$ be a parameter on $X_{str}$ that has a zero of order $1$ at $Q$.
Without loss of generality, we can assume that:\\
(iii) $R_{Q,0} \simeq k[[x_Q,x_Q',t]]/\langle x_Qx_Q'-t\rangle$; and\\
(iv) $R_{Q, \infty} \simeq k[[x_Q^{-1}, t]]$, and this isomorphism identifies the ideal $\langle x_Q^{-1} \rangle$
with the point $Q_S$\footnote{or, more precisely, 
the closure of $Q_K$ in ${\mathcal X}_S$, base changed to ${\rm Spec}(R_{Q,\infty})$.}.

Consider the $\ZZ/p\ZZ$-cover $\pi_\circ:Y_\circ \to X_\circ$ from Section~\ref{Sbuild}.
Its branch locus is $\{\infty_Q \mid Q \in B\}$ and $X_\circ$ is singular at the points $\{0_Q \mid Q \in B\}$.
The important step is to thicken the cover $\pi_\circ$ at these points.
To thicken $\pi_\circ$ at $0_Q$ for $Q \in B$, let $A_{Q,0}$ be the ring such that
${\rm Spec}(A_{Q,0}) = {\rm Ind}_{\{0\}}^{\ZZ/p\ZZ} {\rm Spec}(R_{Q,0})$; the natural map 
$\psi_{Q, 0}: {\rm Spec}(A_{Q,0}) \to {\rm Spec}(R_{Q,0})$ is an \'etale $\ZZ/p\ZZ$-cover.

To thicken $\pi_\circ$ at $\infty_Q$ for $Q \in B$, 
consider the restriction of $\pi_Q:Y_Q \to X_Q$ to ${\rm Spec}(k[[x_Q^{-1}]])$.
The corresponding extension of function fields is given by
an equation of the form $y^p_Q-y_Q=f_Q$ for some 
$f_Q \in {\rm Frac}(k[[x_Q^{-1}]])= k((x_Q^{-1}))$.
By Artin-Schreier theory, since every element of $k[[x_Q^{-1}]]$ is of the form $\alpha^p- \alpha$, 
without loss of generality, we can suppose that $f_Q \in k[x_Q]$.
Furthermore, by Artin-Schreier theory, we can suppose $f_Q$ has degree $d_Q$.
We view $f_Q$ as a function on ${\rm Spec}(R_{Q, \infty})$ which has a pole only on the divisor 
$x_Q^{-1}=0$.  The equation $y^p_Q-y_Q-f(x_Q)$ then defines a ring $A_{Q, \infty}$ and 
a $\ZZ/p\ZZ$-cover $\psi_{Q, \infty}: {\rm Spec}(A_{Q, \infty}) \to {\rm Spec}(R_{Q, \infty})$, 
which is branched only over $x_Q^{-1}=0$; the cover $\psi_{Q, \infty}$
can be viewed as a constant deformation of the restriction of $\pi_Q$ near $\infty_Q$.

The data of $\pi_\circ$, together with the data of $\psi_{Q, 0}$ and $\psi_{Q, \infty}$ for $Q \in B$, 
defines a \emph{thickening problem} as in \cite[page 288]{harbaterstevenson}.
By \cite[Theorem 4]{harbaterstevenson}, there exists a solution to the thickening problem.
This solution is a $\ZZ/p\ZZ$-cover $\tilde{\pi}:{\mathcal Y} \to {\mathcal X}$ of $S$-curves, 
whose special fiber is $\pi_\circ$. 
Furthermore, by \cite[Theorem 5]{harbaterstevenson}, 
over the generic fiber, the branch locus of $\tilde{\pi}_{S_K}$ is 
$B_{S_K} = B_S \times_S S_K$, 
and the ramification invariant above the generic geometric point of $Q_S$ is $d_Q$.

By Artin's algebraization theorem, the cover $\tilde{\pi}$ descends to ${\rm Spec}(L)$ where 
$L$ is a finite extension of $k[t]$.  Choose a closed point $s''$ of ${\rm Spec}(L)$.
Then the fiber $\pi''$ of $\tilde{\pi}_S$ over $s''$ is a cover satisfying the properties in Proposition~\ref{Pharbater}.
\end{remark}

\section{Covers of the affine line}

Consider the special case that $X \simeq \PP^1$ and $B=\{\infty\}$ and $d \geq 1$ is a prime-to-$p$ integer.  
In this case, one considers exponential sums associated with a polynomial $f$ in one variable and having degree $d$.  

For any non-trivial additive character $\psi$ of $\FF$ and any natural number $\ell$, 
one can define the exponential sum $S_\ell(f, \psi)$ as follows.
Let $\FF_\ell$ denote the unique field which is a degree $\ell$ extension of $\FF$; it has cardinality $q^\ell$.
Let $\psi_\ell = \psi \circ {\rm Tr}_{\FF_\ell/\FF}$, which is a non-trivial additive character of $\FF_\ell$. 
We consider the exponential sums $S_\ell(f, \psi):= \sum_{a \in \FF_\ell}\psi_\ell(f(a))$ and the $L$-polynomial 
$L(f, \psi, T) := {\rm exp}(\sum_{\ell=1}^\infty S_\ell(f, \psi) T^\ell/\ell)$.
This $L$-polynomial has a Newton polygon, denoted $\NP_{f, \psi}$.

Zhu introduced the following explicit Newton polygons \cite[Section 4, page 679]{Zhu03}.

\begin{notation} \label{NBF} For $1 \leq n \leq d-1$ let
\begin{equation} \label{EYn}
Y_n := \min_{\sigma \in S_n} \sum_{k=1}^n \left \lceil \frac{pk-\sigma(k)}{d} \right\rceil.
\end{equation}
Let $\GNP(d,p)$ be the lower convex hull of $(0,0)$ and $(n, \frac{Y_n}{p-1})$ for $1 \leq n \leq d-1$.\\
Let $(p-1) \GNP(d,p)$ be the lower convex hull of $(0,0)$ and $((p-1)n, Y_n)$ for $1 \leq n \leq d-1$.
\end{notation}

Zhu proved that $\GNP(d,p)$ is the Newton polygon for the exponential sum associated to a generic $f$ provided $p$ is sufficiently large \cite[Theorem 5.1]{Zhu03}.
This was extended by Blache and F\'erard, who proved that 
$\GNP(d,p)$ occurs for all $f$ in an explicitly defined Zariski open subset,
when $p \geq 3d$ \cite{BlacheFerard}.  We state a corollary of their results for covers.

\begin{thm} \label{TBF} 
\cite[Theorem 4.1]{BlacheFerard}
Suppose $\FF$ is a finite field of characteristic $p$ with $p \geq 3d$.
Then there exists a $\ZZ/p\ZZ$-cover $\pi:Y \to \PP^1$ defined over $\FF$, which is branched only above $\infty$ and where 
it has ramification invariant $d$, such that the Newton polygon of $Y$ 
equals the generic Newton polygon
$(p-1) \GNP(d,p)$.
\end{thm}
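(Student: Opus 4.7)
The plan is to reduce the statement about covers to a statement about exponential sums, then quote Blache–F\'erard directly. Any $\ZZ/p\ZZ$-cover $\pi : Y \to \PP^1$ over $\FF$ branched only above $\infty$ with ramification invariant $d$ can be put in the Artin–Schreier form $y^p-y=f(x)$ with $f \in \FF[x]$, and by killing $\wp$-equivalent terms one can arrange $\deg f = d$ with $\gcd(d,p)=1$. Conversely, every such $f$ defines such a cover. So the theorem amounts to exhibiting $f \in \FF[x]$ of degree $d$ for which the Newton polygon of $Y$ equals $(p-1)\GNP(d,p)$.

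Next I would invoke the standard decomposition of the $L$-polynomial of the cover along characters of the Galois group. Writing $\psi$ for the non-trivial additive characters of $\FF_p$, one has
\begin{equation}
L(Y,T) \;=\; \prod_{\psi \neq 1} L(f,\psi,T),
\end{equation}
where each $L(f,\psi,T)$ is the exponential sum $L$-polynomial of degree $d-1$ defined in the paragraph preceding Notation~\ref{NBF}. The non-trivial characters $\psi$ are Galois-conjugate over $\QQ$, so the Newton polygons $\NP_{f,\psi}$ all coincide with a common polygon $\NP_f$ of horizontal length $d-1$. Consequently, $\NP_Y$ is obtained by concatenating $p-1$ copies of $\NP_f$; as a multi-set of slopes this is exactly $(p-1) \cdot \NP_f$, i.e.\ the polygon with horizontal length $(p-1)(d-1)$ whose breakpoint multi-set is $\{((p-1)n, (p-1)\cdot \tfrac{Y_n}{p-1}) : 1 \leq n \leq d-1\}$ once $\NP_f = \GNP(d,p)$. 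Thus it suffices to produce an $f \in \FF[x]$ of degree $d$ with $\NP_f = \GNP(d,p)$.

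For this final step I would apply \cite[Theorem 4.1]{BlacheFerard}: under the hypothesis $p \geq 3d$, the equality $\NP_f = \GNP(d,p)$ holds for every $f$ in a nonempty Zariski open subset $U$ of the affine space $\AA^{d}$ parametrizing polynomials of degree $d$ (with, say, the leading coefficient fixed to make $U$ genuinely an open in $\AA^{d-1}$ if one prefers, or noting that $\AA^d$ minus a proper closed subset always has $\FF$-points once $\FF$ is large enough, which one can arrange by enlarging $\FF$ inside its algebraic closure if necessary; enlarging $\FF$ does not affect the Newton polygon). The open set $U$ is defined over $\FF_p$ and hence over $\FF$, so by Lang–Weil or a direct Chevalley–Warning argument $U(\FF) \neq \emptyset$ after, if needed, replacing $\FF$ by a finite extension. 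Choose such an $f$; the associated cover $Y$ is defined over $\FF$, is branched only over $\infty$, has ramification invariant $d$, and by the preceding paragraph satisfies $\NP_Y = (p-1)\GNP(d,p)$.

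The only real content beyond quoting Blache–F\'erard is the translation via the character decomposition of $L(Y,T)$; the potential subtlety is the field of definition, which is harmless because the Zariski open condition in \cite{BlacheFerard} is defined over the prime field and finite fields of sufficient size automatically meet it. I expect no obstacle beyond this bookkeeping.
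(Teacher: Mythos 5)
Your proposal is correct and follows essentially the same route as the paper: normalize $f$ via Artin--Schreier theory, decompose $Z(Y,T)$ (equivalently $L(Y,T)$, since $L(\PP^1,T)=1$) along the non-trivial characters of $\ZZ/p\ZZ$ to get $\NP_Y=(p-1)\NP_{f,\psi}$, and then invoke \cite[Theorem 4.1]{BlacheFerard}. The only differences are cosmetic: you spell out the Galois-conjugacy reason the $\NP_{f,\psi}$ agree across $\psi$ (the paper leaves this implicit behind the phrase ``$(p-1)$ is a scaling factor''), and you add a caveat about possibly enlarging $\FF$ to find an $\FF$-rational point of the open locus, whereas the paper, following Blache--F\'erard's formulation, takes it as given for any $\FF$ of characteristic $p\geq 3d$; in any case this has no bearing on the application, which is over $k=\bar{\FF}_p$.
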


\begin{proof}
For $f \in \FF[x]$ of degree $d$, the Artin-Schreier equation $y^p-y=f$ defines 
a $\ZZ/p \ZZ$-cover $\pi:Y \to \PP^1$ branched only at $\infty$ with ramification invariant $d$.
Without loss of generality, suppose $f$ is monic with no constant term and the coefficient of $x^{d-1}$ is $0$.

The Newton polygon of $Y$ is $(p-1)\NP_{f, \psi}$, where $p-1$ is a scaling factor on the Newton polygon 
or, equivalently, on the multiplicities of the slopes of the Newton polygon.\footnote{This is because the zeta function of $Y$ factors as 
$Z(Y, T) = Z(X,T) \prod_{\psi} L(f, \psi, T)$,
where $\psi$ ranges over the non-trivial characters $\ZZ/p \ZZ \to \ZZ_p[\zeta_p]^\times$. }

By \cite[Theorem 4.1]{BlacheFerard}, 
if the coefficients of $f$ are in an (explicitly determined) open dense subset of ${\mathbb A}^{d-2}$,
then $\NP_{f, \psi}={\rm GNP}(d,p)$.
Thus $\NP_Y = (p-1){\rm GNP}(d,p)$.
\end{proof}

The lower bound $\NP^{\hodge}_{\PP^1}(\{d\})$ defined in \eqref{Elowerbound} does not equal $(p-1)\GNP(d,p)$ in general.
One reason for this is that the Newton polygon of a curve must have integer breakpoints.  
However, this is not the only reason; in some cases there are symmetric Newton polygons  
starting at $(0,0)$ and ending at $(2g,g)$ with integer breakpoints which lie strictly between 
$\NP^{\hodge}_{\PP^1}(\{d\})$ and $(p-1)\GNP(d,p)$ in the natural partial ordering.
We thank Joe Kramer-Miller for the following example.

\begin{example} \label{ex:matching}
Let $p=23$ and $d=6$.  Then $\NP^{\hodge}_{\PP^1}(\{d\})$ is the Newton polygon with vertices 
$$(0, 0), (22, 11/3), (44, 11), (66, 22), (88, 110/3), \text{ and } (110, 55).$$
Furthermore, $(p-1) \GNP(d,p)$ is the Newton polygon with vertices 
$$(0, 0), (22, 4), (44, 12), (66, 23), (88, 37),  \text{ and } (110, 55).$$
But the Newton polygon with vertices
$$(0, 0), (22, 4), (44, 11), (66, 22), (88, 37),\text{ and } (110, 55)$$
is symmetric, has integer breakpoints, and lies between them.
\end{example}

\begin{remark} \label{rmk:p1modd}
This is not an issue when $p \equiv 1 \pmod{d}$.  In that situation, it is known that $\NP^{\hodge}_{\PP^1}(\{d\})$ equals $(p-1)\GNP(d,p)$; see for example \cite[Remark 4.1]{BlacheFerard}.
\end{remark}

\section{The main result}

Recall the definition of $\NP_X(D)$ from \eqref{Esmall}.

\begin{thm} \label{Tmain}
Suppose $X$ is a smooth projective connected curve defined over 
$k=\bar{\FF}_p$.  
Let $B \subset X(k)$ be a finite, possibly empty, set of points.
For $Q \in B$, let $d_Q$ be a positive prime-to-$p$ integer.

Suppose $L_{\NP}(D)$ is a fixed lower bound for the Newton polygon of a 
$\ZZ/p\ZZ$-cover $\pi: Y \to X$ with branch locus $B$ and ramification invariants $\{d_Q\}_{Q \in B}$.
If $X$ is ordinary and 
$p \geq {\rm max}\{3d_Q\}_{Q \in B}$, 
then there exists a $\ZZ/p\ZZ$-cover $\pi: Y \to X$ of smooth curves over $k$, 
with branch locus $B$ and ramification invariants $D=\{d_Q\}_{Q\in B}$,
such that the Newton polygon of $Y$ satisfies
\begin{equation}
L_{\NP}(D) \leq \NP_Y \leq \NP_X(D).
\end{equation}
\end{thm}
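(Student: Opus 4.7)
The plan is to realize $Y$ as a smooth fiber of a formal-patching deformation whose special fiber is a controlled singular $\ZZ/p\ZZ$-cover, and then to apply upper semicontinuity of Newton polygons. I assemble the singular cover $\pi_\circ\colon Y_\circ \to X_\circ$ of Section~\ref{Sbuild} from carefully chosen ingredients: when $X \not\simeq \PP^1$, take $\pi'\colon Y' \to X$ to be any connected \'etale $\ZZ/p\ZZ$-cover (one exists because $X$ is ordinary of genus $g \geq 1$), and when $X \simeq \PP^1$ take $\pi'$ to be the disconnected trivial cover. For each $Q \in B$, use Theorem~\ref{TBF} to produce $\pi_Q\colon Y_Q \to \PP^1$, branched only above $\infty$ with ramification invariant $d_Q$, such that $\NP_{Y_Q} = (p-1)\GNP(d_Q, p)$; this is the one place where the bound $p \geq \max\{3d_Q\}$ enters. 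The degenerate cases $X \simeq \PP^1$ with $r \leq 1$, where the Section~\ref{Sbuild} construction does not apply, I would treat separately: the case $r = 1$ is immediate from Theorem~\ref{TBF}, and $r = 0$ is either vacuous or trivial.

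The heart of the argument is to compute $\NP_{Y_\circ} = \NP_X(D)$ using Proposition~\ref{Pextension}. In either case of that proposition, ${\rm Jac}(Y_\circ)$ is a semi-abelian variety whose abelian quotient is the direct sum of the Jacobians of the smooth components and whose toric part has rank $\epsilon$. The Newton polygon of $Y_\circ$ has length $2 g_{Y_\circ}$; it is the concatenation of the Newton polygons of the abelian factors together with $\epsilon$ extra slopes of $0$ and $\epsilon$ extra slopes of $1$ from the torus. The abelian factor coming from $Y'$ (respectively from ${\rm Jac}(X)^p$) is ordinary: either by Deuring--Shafarevich applied to the connected \'etale cover, which gives $g_{Y'} = f_{Y'} = g + (p-1)(g-1)$, or trivially component-by-component in the disconnected case. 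Adding the toric contribution shows that slope $0$ (and slope $1$) appears in $\NP_{Y_\circ}$ with multiplicity $g + (p-1)(g+r-1)$, matching the count predicted by Lemma~\ref{Lprank}. The remaining slopes, strictly between $0$ and $1$, come from the blocks ${\rm Jac}(Y_Q)$, which by construction contribute exactly $(p-1)\GNP(d_Q, p)$. Comparing with \eqref{Esmall} yields $\NP_{Y_\circ} = \NP_X(D)$.

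To finish, Proposition~\ref{Pharbater} produces a $\ZZ/p\ZZ$-cover $\pi\colon Y \to X$ of smooth curves over $k$, with the required branch locus $B$ and ramification invariants $D$, as the fiber at a suitable closed point of the algebraized formal-patching family whose nearby special fiber is $\pi_\circ$. Grothendieck's specialization theorem (upper semicontinuity of Newton polygons in a flat family whose Jacobians form a family of semi-abelian varieties) then gives $\NP_Y \leq \NP_{Y_\circ} = \NP_X(D)$, and combined with the hypothesis $L_{\NP}(D) \leq \NP_Y$ this is the desired inequality. I expect the main obstacle to be the bookkeeping around the Newton polygon of the semi-abelian Jacobian of $Y_\circ$ --- in particular, making sure that a toric part of rank $\epsilon$ contributes $2\epsilon$ rather than $\epsilon$ extra slopes, split symmetrically as $\{0\}^\epsilon \cup \{1\}^\epsilon$, since this is precisely what produces $\NP_X(D)$ as the upper bound rather than some strictly smaller polygon.
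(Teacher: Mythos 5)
Your proof is correct and follows essentially the same strategy as the paper's: assemble the singular cover $\pi_\circ\colon Y_\circ \to X_\circ$ from a connected (or, when $g=0$, disconnected) unramified $\ZZ/p\ZZ$-cover $\pi'$ of $X$ and covers $\pi_Q$ of $\PP^1$ realizing $(p-1)\GNP(d_Q,p)$ via Theorem~\ref{TBF}, compute $\NP_{Y_\circ}=\NP_X(D)$ from Proposition~\ref{Pextension} by tracking the toric and abelian contributions, then deform via Proposition~\ref{Pharbater} and invoke Grothendieck--Katz specialization. The one place you go slightly beyond the paper is in explicitly flagging the degenerate cases $X\simeq\PP^1$ with $r\leq 1$, which are excluded from the construction of Section~\ref{Sbuild}; the paper's proof does not comment on them, and your disposal (that $r=1$ is immediate from Theorem~\ref{TBF} and $r=0$ is vacuous) is sound. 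Your bookkeeping of the slope multiplicities, including that the rank-$\epsilon$ torus contributes $\{0\}^\epsilon\cup\{1\}^\epsilon$, also matches the paper's.
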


\begin{remark}
We need the lower bound on $p$ in order to apply the results of \cite{BlacheFerard};
it is possible that this condition can be removed.
\end{remark}

\begin{proof}
For each $Q \in B$, if $p \geq 3 d_Q$, then by Theorem \ref{TBF}
there exists a $\ZZ/p \ZZ$-cover $\pi_Q: Y_Q \to {\mathbb P}^1$, 
branched only at $\infty$, with 
ramification invariant $d_Q$ and Newton polygon $(p-1)\GNP(d_Q,p)$. 

First suppose $g>0$.
Since $X$ is ordinary, there is an unramified $\ZZ/p \ZZ$-cover $\pi': Y' \to X$ where $Y'$ is connected.
The genus of $Y'$ is $g_{Y'} = g + (p-1)(g -1)$ by \eqref{Egenus}.
By the Deuring-Shafarevich formula, if $X$ is ordinary then $Y'$ is ordinary, so its Newton polygon
has $g_{Y'}$ slopes of $0$ and of $1$.

Construct the cover $\pi_\circ: Y_\circ \to X_\circ$ as in Section \ref{Sbuild}, using 
the inputs of the covers $\pi':Y' \to X$ and $\pi_Q:Y_Q \to \PP^1$ for $Q \in B$.  
By Proposition \ref{Pextension}(1), ${\rm Jac}(Y_\circ)$ is an extension of 
${\rm Jac}(Y') \oplus \left(\bigoplus_{Q \in B} {\rm Jac}(Y_Q)\right) $ by a torus of rank $\epsilon=r(p-1)$.  
The torus increases the $p$-rank of ${\rm Jac}(Y_\circ)$ by $\epsilon$.  
So the slopes of the Newton polygon of $Y_\circ$ are 
\[
\{0\}^{g_{Y'} + r(p-1)} \cup \{1\}^{g_{Y'} + r(p-1)} \cup \bigcup_{Q \in B} (p-1)\GNP(d_Q,p).
\]

If $g=0$, we instead take $\pi': Y' \to X$ be a disconnected $\ZZ/p\ZZ$-cover.
By Proposition \ref{Pextension}(2), ${\rm Jac}(Y_\circ)$ is an extension of 
${\rm Jac}(X)^p \oplus \left(\bigoplus_{Q \in B} {\rm Jac}(Y_Q)\right)$ by a torus of rank $\epsilon=(r-1)(p-1)$. 
In this case, the slopes of the Newton polygon of $Y_\circ$ are
\[
\{0\}^{pg + (r-1)(p-1)} \cup \{ 1\}^{pg + (r-1)(p-1)} \cup \bigcup_{Q \in B} (p-1)\GNP(d_Q,p).
\]

In either case, 
the Newton polygon of $Y_\circ$ equals $\NP_X(D)$. 
By Proposition \ref{Pharbater}, 
the $\ZZ/p \ZZ$-cover $\pi_\circ:Y_\circ \to X_\circ$ has a flat deformation to a 
$\ZZ/p \ZZ$-cover $\pi'':Y'' \to X$ defined over $k$ 
such that $Y''$ is smooth and connected,  
$\pi''$ has branch locus $B$, and $\pi''$ has ramification invariant $d_Q$ above each $Q \in B$.
The Newton polygon can only go down in such a deformation by a result of Grothendieck and Katz, 
see \cite[Theorem 2.3.1]{katzslope}.
By hypothesis, $L_{\NP}(D)$ is a lower bound for the 
Newton polygon of such a cover. So the Newton polygon of $Y''$ lies between $L_{\NP}(D)$ and $\NP_X(D)$.
\end{proof}

Theorem~\ref{Tintro} follows immediately from Theorem~\ref{Tmain}, taking $L_{\NP}(D) = \NP_X^{\hodge}(D)$ and using Kramer-Miller's result \cite[Corollary 1.2]{KramerMiller}.

\begin{cor} \label{cor:p1modd}
With the notation and hypotheses of Theorem~\ref{Tintro}, suppose furthermore that $p \equiv 1 \pmod{d_Q}$ for every $Q \in B$.  Then there exists a $\ZZ/p \ZZ$-cover $\pi : Y \to X$ with branch locus $B$ and ramification invariants $D = \{d_Q\}_{Q \in B}$, 
such that $Y$ has Newton polygon $\NP^{\hodge}_X(D) = \NP_X(D)$.
\end{cor}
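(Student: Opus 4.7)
The plan is to deduce this directly from Theorem~\ref{Tintro} by showing that the two bounds sandwiching $\NP_Y$ coincide under the additional hypothesis $p \equiv 1 \pmod{d_Q}$. First I would invoke Theorem~\ref{Tintro} (which itself follows from Theorem~\ref{Tmain} combined with Kramer-Miller's lower bound) to obtain a $\ZZ/p\ZZ$-cover $\pi : Y \to X$ with branch locus $B$, ramification invariants $D$, and Newton polygon satisfying
\[
\NP^{\hodge}_X(D) \leq \NP_Y \leq \NP_X(D).
\]
So everything reduces to verifying the equality $\NP^{\hodge}_X(D) = \NP_X(D)$ of the two sandwich polygons.

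Comparing the defining formulas \eqref{Elowerbound} and \eqref{Esmall}, the two polygons share the contributions from $\NP_X$ and from the slopes $\{0\}^{(p-1)(g+r-1)} \cup \{1\}^{(p-1)(g+r-1)}$, and differ only in the local contribution at each $Q \in B$: in $\NP^{\hodge}_X(D)$ one has the multi-set $\{1/d_Q, \ldots, (d_Q-1)/d_Q\}^{p-1}$, whereas in $\NP_X(D)$ one has the slopes of $(p-1)\GNP(d_Q, p)$. It therefore suffices to show, for each $Q \in B$ separately, that
\[
\{ 1/d_Q, 2/d_Q, \ldots, (d_Q-1)/d_Q \}^{p-1} = \text{slopes of } (p-1)\GNP(d_Q,p),
\]
when $p \equiv 1 \pmod{d_Q}$. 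This is exactly the statement recorded in Remark~\ref{rmk:p1modd}, citing \cite[Remark 4.1]{BlacheFerard}: under the congruence $p \equiv 1 \pmod d$, the generic Newton polygon $\GNP(d,p)$ coincides with the Hodge polygon whose slopes are $\{1/d, \ldots, (d-1)/d\}$, so scaling by $p-1$ yields equality of the corresponding multi-sets.

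Once this equality is established, each local contribution in $\NP^{\hodge}_X(D)$ matches the corresponding local contribution in $\NP_X(D)$, the remaining terms agree on the nose, and therefore $\NP^{\hodge}_X(D) = \NP_X(D)$. Combining with the sandwich from Theorem~\ref{Tintro} forces $\NP_Y = \NP^{\hodge}_X(D) = \NP_X(D)$, as claimed. The only real content beyond quoting results is the slope-matching at each branch point, and this is entirely a consequence of the explicit description of $Y_n$ in Notation~\ref{NBF} in the congruence case $p \equiv 1 \pmod{d_Q}$; no independent argument is needed and there is no serious obstacle here.
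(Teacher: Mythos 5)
Your proposal is correct and follows essentially the same route as the paper: invoke Theorem~\ref{Tintro} for the sandwich bounds, then use Remark~\ref{rmk:p1modd} (i.e.\ \cite[Remark 4.1]{BlacheFerard}) to conclude that under $p \equiv 1 \pmod{d_Q}$ the polygons $\NP^{\hodge}_X(D)$ and $\NP_X(D)$ agree. You merely unwind the definitions \eqref{Elowerbound} and \eqref{Esmall} in a bit more detail than the paper does, but the argument is the same.
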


In other words, Corollary \ref{cor:p1modd} shows that the lower bound $\NP^{\hodge}_X(D)$ is sharp in this special case.

\begin{proof}
By Remark~\ref{rmk:p1modd}, $\NP_X^{\hodge}(D) = \NP_X(D)$ under this congruence condition on $p$.
Thus the result follows from Theorem~\ref{Tintro}.
\end{proof}

\begin{remark}
Suppose $g > 1$ and $X$ is not ordinary.  If $\pi:Y \to X$ is a $\ZZ/p\ZZ$-cover, it is not 
currently known which Newton polygons can occur for $Y$, even if the cover is unramified.  
As noted in \cite[\S1.4]{KramerMiller}, the lower bound $\NP^{\hodge}_X(D)$ is not the correct lower bound 
for the Newton polygon of $Y$ as it has too many segments of slope $0$ compared with Lemma~\ref{Lprank}.
\end{remark}

\begin{remark}
In the proof of Theorem \ref{Tmain}, it is also possible to use a disconnected cover $\pi':Y' \to X$ when $g >0$. 
We chose a connected cover since it adds an additional level of control that could be useful in future applications.
\end{remark}


\newcommand{\etalchar}[1]{$^{#1}$}
\providecommand{\bysame}{\leavevmode\hbox to3em{\hrulefill}\thinspace}
\providecommand{\MR}{\relax\ifhmode\unskip\space\fi MR }
\providecommand{\MRhref}[2]{%
  \href{http://www.ams.org/mathscinet-getitem?mr=#1}{#2}
}
\providecommand{\href}[2]{#2}

\end{document}